\newcommand{\beq}[1]{ \begin{equation}\label{#1} }
\newcommand{\eeq}{\end{equation}}
\numberwithin{equation}{section}
\DeclareMathOperator*{\esssup}{ess\,sup}
\def\RR{{\mathbb R}}
\newtheorem{theorem}{Theorem}
\newtheorem{lemma}{Lemma}
\newtheorem{corollary}{Corollary}
\newtheorem{example}{Example}
\newtheorem{remark}{Remark}
\journal{Applied Mathematics and Computation}
\begin{document}

\begin{frontmatter}

\title{Solution estimates for linear differential equations with delay}

\author[label1]{Leonid Berezansky}
\author[label2]{Elena Braverman}
\address[label1]{Dept. of Math.,
Ben-Gurion University of the Negev,
Beer-Sheva 84105, Israel}
\address[label2]{Dept. of Math. and Stats., University of
Calgary,2500 University Drive N.W., Calgary, AB, Canada T2N 1N4; e-mail
maelena@ucalgary.ca, phone 1-(403)-220-3956, fax 1-(403)--282-5150 (corresponding author)}






\begin{abstract}
In this paper, we give explicit exponential estimates
$\displaystyle
|x(t)|\leq M e^{ -\gamma (t-t_0) }$, where $t\geq t_0$, $M>0$,
for solutions of a linear scalar delay differential equation  
$$
\dot{x}(t)+\sum_{k=1}^m b_k(t)x(h_k(t))=f(t),~~ t\geq t_0,~
x(t)=\phi(t),~t\leq t_0.
$$
We consider two different cases:  when $\gamma>0$ (corresponding to exponential stability) and the case of
$\gamma <0$ when the solution is, generally, growing.

In the first case, together with the exponential estimate, we also obtain an 
exponential stability test, in the second case we get estimation for solution growth. 
Here both the coefficients and the delays are measurable, not necessarily continuous. 
\end{abstract}


\begin{keyword}
linear delay differential equations,
explicit solution estimates, variable delays and coefficients,
exponential stability

\noindent
{\bf AMS subject classification:} 
34K20, 34K25, 34K06
\end{keyword}

\end{frontmatter}

\section{Introduction}

Exponential or asymptotic stability of solutions is one of the most important properties of a functional differential equation (FDE).
There are many publications on stability of a FDE.
We cite here the papers \cite{BB2,BB3,BB5,BB1,GD,GH1,GH2,Kz,SYC,YS} and the monographs 
\cite{AS,F,Gil,Kharitonov,KM} 
fully  or partially devoted to  asymptotic stability of this class of equations. 
Asymptotic stability describes long time behavior of solutions.  
But in applications of FDEs, usually it is necessary to know 
estimates of solutions  on finite intervals. 
The following well-known topics are close to one considered in the paper: Lyapunov 
exponents, asymptotic integration, and growth rates 
\cite{Appleby,Appleby2,Barreira,Diblik1,Diblik2,Dom,Gyori,Khar,Philos,Pinto,Pituk,Hu}.

In this paper, we propose exponential estimates for solutions of a scalar delay differential equation (DDE) with variable coefficients and delays
\begin{equation}\label{1.1}
\dot{x}(t)+\sum_{k=1}^m b_k(t)x(h_k(t))=0,~~t\geq t_0,~ h_k(t)\leq t, ~k=1, \dots,m. 
 \end{equation}
Estimates for non-homogeneous equations are also considered.
We investigate two different cases, one when the equation is exponentially stable and the second  when it is not,
or when asymptotic behaviour of solutions is unknown. 

For the equation with one delay
$$
\dot{x}(t)+a(t)x(h(t))=0,
$$
for example, the first case occurs when $a(t)\geq a_0>0$ and the equation is exponentially stable, the second case is for 
$a(t)\geq 0$
but the equation is not exponentially stable,
or for $a(t)$ being non-positive or oscillatory.

All our conditions can be applied to a wide class of scalar linear 
differential equations with variable coefficients and delays, without the assumption that these parameters are continuous functions, leading to solution estimates.
For equations with variable coefficients and delays,
such solution estimates are obtained for the first time, and for  
ordinary differential equations are sharp.
                             
The paper is organized as follows. After some definitions and auxiliary results in Section 2, Section 3 contains an estimate 
for the fundamental function of an  equation with
a non-delay term. Section 4 is the main part of the paper. Here we consider solution estimates 
in the two above mentioned cases. 
Section 5 presents illustrating examples. Section 6 contains a discussion and suggests some projects for future research.

\section{Preliminaries}

We consider equation  (\ref{1.1}) under the following conditions:

(a1)  $b_k:[0,\infty) \to {\mathbb R}$ are Lebesgue measurable essentially bounded 
functions; 

(a2)  the functions $h_k$  are Lebesgue measurable  on $[0,\infty)$, 
and 
for some finite constants 
$\delta_k\leq\tau_k$,
$0\leq \delta_k\leq t-h_k(t)\leq \tau_k$, $k=1, \dots, m$, $t \geq t_0 \geq 0$, $\tau=\max_k \tau_k$.

Together with equation (\ref{1.1}), we consider for any $t_0\geq 0$ an initial value problem for the non-homogeneous
equation 
\begin{equation}\label{2.1}
\dot{x}(t)+\sum_{k=1}^m b_k(t)x(h_k(t))=f(t),~ t\geq t_0; ~~x(t)=\phi(t),~ t\leq t_0,
\end{equation}
where 
\\
(a3) $f:[t_0,\infty) \to {\mathbb R}$ is a Lebesgue measurable  locally essentially bounded 
function,  the initial function $\phi:[t_0-\tau, t_0] \rightarrow \RR$ is bounded and Borel measurable.


By {\bf the solution of problem} (\ref{2.1}) we mean a locally  absolutely continuous function  $x: [t_0,\infty)\to {\mathbb R}$ satisfying almost everywhere the equation, whenever
$t\geq t_0$,
and the initial conditions if $t\leq t_0$.

For any fixed $s$, the solution $X(t,s)$ of the problem with the zero initial function and the initial value 
being equal to one
$$
\frac{\partial X(t,s)}{\partial t}+\sum_{k=1}^m b_k(t)X(h_k(t),s)=0, ~t\geq s; ~~X(t,s)=0, ~t<s,~~ X(s,s)=1
$$
is called {\bf the fundamental function}.



According to \cite[Theorem 4.3.1]{AS},  the solution of  (\ref{2.1}) exists and is unique.
Also, the solution has the representation 
\begin{equation}
\label{2.2}
x(t)=X(t,t_0)x(t_0)-\sum_{k=1}^m \int_{t_0}^{t_0+\tau_k} X(t,s) b_k(s)\phi(h_k(s))ds+\int_{t_0}^t X(t,s)f(s)ds.
\end{equation}
In (\ref{2.2}), we assume $\phi(t)=0$, $t>t_0$.

We call equation (\ref{1.1})  {\bf uniformly exponentially stable} 
if, for some positive numbers $M$ and $\gamma$, not depending on $t_0$ and $\phi$,
a solution of homogeneous (\ref{2.1}), where $f\equiv 0$, satisfies 
\begin{equation}\label{2.3}
|x(t)|\leq M e^{-\gamma (t-t_0)} \sup_{t \in [t_0 - \tau,  t_0]}|\phi(t)|,~~t\geq t_0.
\end{equation}

We will say that the fundamental function $X(t,s)$ of (\ref{1.1}) 
{\bf has an exponential estimate} if there exist $M_0>0$ and $\gamma_0>0$ such that
\begin{equation}\label{2.4}
|X(t,s)|\leq M_0 e^{-\gamma_0(t-s)}, \mbox{~~for any~~~} t\geq s\geq 0.
\end{equation}

We will further apply the Bohl-Perron theorem stated below.

\begin{lemma}\label{lemmaBP}\cite[Theorem 4.7.1]{AS}
Assume that (a1)-(a2) hold. If the solution of the problem 
\begin{equation}\label{10}
\dot{x}(t)+\sum_{k=1}^m b_k(t)x(h_k(t))=f(t), ~x(t)=0,~t\leq t_0   
\end{equation}
is bounded on $[t_0,\infty)$ for any 
essentially bounded function $f:[t_0,\infty) \to {\mathbb R}$, 
equation (\ref{1.1}) is uniformly exponentially stable.
\end{lemma}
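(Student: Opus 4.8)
The plan is to read the statement as a Bohl--Perron principle, in which merely qualitative boundedness of the forced response is promoted to a quantitative exponential estimate. I would realize the passage from a bounded input $f$ to the solution of \rf{10} as a linear operator, apply the closed graph theorem to obtain its boundedness, and then sharpen boundedness into exponential decay by means of an exponential change of variables.

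First I would fix $t_0$ and, using the representation \rf{2.2} with $\phi\equiv 0$, introduce the solution operator
$$
(Lf)(t)=\int_{t_0}^t X(t,s)f(s)\,ds,
$$
which by hypothesis carries every essentially bounded $f$ to a bounded function. Since $L$ is causal and integral, it is readily seen to be closed: if $f_n\to f$ and $Lf_n\to g$ in the relevant norms, passing to the limit under the integral gives $g=Lf$. The closed graph theorem then furnishes a constant $K$ with $\|Lf\|_\infty\le K\|f\|_\infty$, and testing against $f(s)=\operatorname{sign}X(t_1,s)$ on $[t_0,t_1]$ yields the kernel bound
$$
\sup_{t\ge t_0}\int_{t_0}^t|X(t,s)|\,ds\le K.
$$
Independently, integrating \rf{1.1} and applying Gronwall's inequality together with (a1) gives the a priori growth bound $|X(t,s)|\le e^{B(t-s)}$ with $B=\sum_k\esssup_t|b_k(t)|$, which controls $X$ on any bounded time window.

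To convert boundedness into decay I would substitute $x(t)=e^{-\lambda t}z(t)$. This turns \rf{1.1} into a delay equation of the same type, with coefficients $b_k(t)e^{\lambda(t-h_k(t))}$ and an additional non-delay term $-\lambda z(t)$; by (a2) the differences $b_k(t)(e^{\lambda(t-h_k(t))}-1)$ and the new term are $O(\lambda)$ in $L^\infty$. Hence the solution operator $L_\lambda$ of the $z$-equation is a small perturbation of $L$, and a Neumann-series argument shows it is again bounded for all sufficiently small $\lambda>0$. The fundamental functions are related by $Y(t,s)=e^{\lambda(t-s)}X(t,s)$, so a uniform bound on $Y$ would give exactly the exponential estimate $|X(t,s)|\le M_0e^{-\lambda(t-s)}$ of \rf{2.4}.

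The decisive and hardest step is therefore to deduce a pointwise bound on the fundamental function from boundedness of the associated solution operator. For an ordinary differential equation this is immediate from the cocycle identity $U(t,s)=U(t,r)U(r,s)$: feeding $f(s)=U(s,t_0)x_0/|U(s,t_0)x_0|$ into $L$ reduces the kernel bound to the differential inequality $(\ln G)'\ge K^{-1}$ for $G(t)=\int_{t_0}^t|U(s,t_0)x_0|^{-1}\,ds$, which already yields the exponential rate. A delay equation has no such semigroup identity, and this is where the work concentrated in \cite[Theorem 4.7.1]{AS} lies: one must combine the a priori bound, the kernel estimate for $L_\lambda$ and the representation \rf{2.2} in the underlying Banach space to obtain $\sup_{t,s}|Y(t,s)|<\infty$ with constants independent of $t_0$. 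Once \rf{2.4} is in hand, substituting it into \rf{2.2} with $f\equiv 0$ and estimating the integrals over the bounded intervals $[t_0,t_0+\tau_k]$ immediately produces \rf{2.3}, i.e. uniform exponential stability. The two points demanding most care are the $\lambda$-independence and $t_0$-independence of all constants, and the passage, for the delay equation, from operator boundedness to the pointwise estimate on the kernel.
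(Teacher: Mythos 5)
First, note that the paper offers no proof of this lemma: it is imported verbatim as \cite[Theorem 4.7.1]{AS}, so there is no internal argument to compare yours against, and a proof you supply has to be self-contained rather than deferring back to that same reference.

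Your outline follows the standard Bohl--Perron strategy, and the first stages are sound: the closed graph theorem for the Cauchy operator $L$, the kernel bound $\sup_{t\ge t_0}\int_{t_0}^t|X(t,s)|\,ds\le K$ obtained from sign test functions (valid here because the equation is scalar), the Gronwall a priori bound, and the exponential weighting $x=e^{-\lambda t}z$. The genuine gap is precisely the step you flag and then leave to ``the work concentrated in \cite[Theorem 4.7.1]{AS}'': passing from boundedness of the operator $L_\lambda$ to the pointwise bound $\sup_{t\ge s}|Y(t,s)|<\infty$. Since the whole lemma \emph{is} that theorem, deferring the decisive step to it is circular, and as written the proof is incomplete. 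The step can be closed elementarily by a cutoff trick: fix $s\ge t_0$, put $w_s(t)=e^{-(t-s)}$ for $t\ge s$ and $w_s(t)=0$ for $t<s$, and set $v=Y(\cdot,s)-w_s$. Then $v$ vanishes for $t\le s$ and solves the $z$-equation with a forcing $g_s=-\dot w_s-\sum_k \tilde b_k(t) w_s(h_k(t))$ (with $\tilde b_k$ the coefficients of the $z$-equation), whose $L^\infty$ norm is bounded by $1+\lambda+\sum_k\|\tilde b_k\|_{[t_0,\infty)}$ \emph{uniformly in} $s$. By causality, a forcing supported on $[s,\infty)$ produces a solution vanishing on $[t_0,s]$, so the Cauchy operator based at $s$ has norm at most that of the one based at $t_0$; hence $\|v\|_\infty$ and therefore $|Y(t,s)|$ are bounded by a constant independent of $s$ and of $t_0$, which also settles the $t_0$-uniformity you worried about. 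A smaller loose end: the Neumann-series comparison of $L_\lambda$ with $L$ should be written as an explicit operator identity, e.g. $z=L\bigl(f+\lambda z-\sum_k(e^{\lambda(t-h_k(t))}-1)b_k(t)z(h_k(t))\bigr)$, so that smallness of the perturbation in operator norm for small $\lambda>0$ is actually exhibited before the series is summed.
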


The main objective of the present paper is to obtain explicit estimates (\ref{2.3})-(\ref{2.4}) and their generalizations for homogeneous equation (\ref{1.1}). 
We consider the case when the equation is exponentially stable and when it is unstable, or asymptotic behaviour is unknown.
In the former case $\gamma>0$, while in the latter case $\gamma<0$.
We also get estimates for  non-homogeneous equations (\ref{2.1}).

\section{Estimates 
for an Auxiliary Equation}

In this section, for  the fundamental function $Y(t,s)$ of the  equation involving both delay terms and a non-delay term
\begin{equation}\label{3.2}
\dot{y}(t)=c(t)y(t)-\sum_{k=1}^m d_k(t)y(h_k(t)), ~t \geq t_0
\end{equation}
we deduce a uniform estimate
\begin{equation}\label{3.1}
|Y(t,s)|\leq K \mbox{~~for~~} t \geq s \geq t_0.
\end{equation}

We assume that for $c, d_k$, assumption (a1) holds,
while the delay functions $h_k$ satisfy (a2).

Denote $\|f\|_J=\esssup_{t\in J}|f(t)|$, where $J=[t_0,t_1]$ or $J=[t_0,\infty)$, $\displaystyle d(t):=\sum_{k=1}^m d_k(t)$.

\begin{lemma}\label{lemma3.1}
If there is an $\alpha_0 >0$ such that
\begin{equation}\label{3.2a}
d(t)-c(t)\geq \alpha_0,~~ K_0:=\left(\|c\|_{[t_0,\infty)}+\sum_{k=1}^m \|d_k\|_{[t_0,\infty)}\right)
\sum_{k=1}^m \tau_k\left\|\frac{d_k}{d-c}\right\|_{[t_0,\infty)}<1
\end{equation}
then (\ref{3.2}) is uniformly exponentially stable. Moreover, the fundamental function $Y(t,s)$ of (\ref{3.2}) satisfies (\ref{3.1}) with $K= (1-K_0)^{-1}.$
\end{lemma}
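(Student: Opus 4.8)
The plan is to treat (\ref{3.2}) as an exponentially damped scalar ODE perturbed by small ``delayed-difference'' terms. Writing $\sum_k d_k(t) y(h_k(t)) = d(t)y(t) - \sum_k d_k(t)\left(y(t)-y(h_k(t))\right)$, equation (\ref{3.2}) becomes
\[
\dot y(t) = -(d(t)-c(t))\,y(t) + \sum_{k=1}^m d_k(t)\left(y(t)-y(h_k(t))\right).
\]
The hypothesis $d-c\ge\alpha_0>0$ makes the main part $\dot u = -(d(t)-c(t))u$ uniformly exponentially stable, with explicit fundamental solution $U(t,s)=\exp\left(-\int_s^t (d-c)\right)$ obeying $0<U(t,s)\le e^{-\alpha_0(t-s)}$. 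I would then represent $Y(\cdot,s)$ by variation of constants against $U$, treating the difference terms as a forcing:
\[
Y(t,s)=U(t,s)+\int_s^t U(t,\xi)\sum_{k=1}^m d_k(\xi)\left(Y(\xi,s)-Y(h_k(\xi),s)\right)\,d\xi .
\]

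The quantitative core rests on three elementary estimates. First, the telescoping identity $\int_s^t U(t,\xi)(d(\xi)-c(\xi))\,d\xi = 1-U(t,s)\le 1$ (since $\partial_\xi U = (d-c)U$); this converts the weight $d-c$ into bounded mass and is exactly what manufactures the factor $\left\|d_k/(d-c)\right\|$ once I write $|d_k(\xi)|\le\left\|d_k/(d-c)\right\|_{[t_0,\infty)}(d(\xi)-c(\xi))$. Second, the increment bound $|Y(\xi,s)-Y(h_k(\xi),s)|\le \tau_k\,\esssup|\partial_t Y|$. Third, from (\ref{3.2}) itself, $|\partial_t Y(\eta,s)|\le\left(\|c\|_{[t_0,\infty)}+\sum_j\|d_j\|_{[t_0,\infty)}\right)\sup|Y|$. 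Feeding these into the representation and collecting constants gives the self-improving inequality $\sup_t|Y(t,s)|\le 1 + K_0\sup_t|Y(t,s)|$, hence $\sup_t|Y(t,s)|\le(1-K_0)^{-1}=K$, which is (\ref{3.1}); to rearrange I would first obtain the bound on finite intervals $[s,t_1]$, uniformly in $t_1$, to know the supremum is a priori finite.

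For uniform exponential stability I would invoke Lemma~\ref{lemmaBP}: rewriting (\ref{3.2}) with the non-delay term $-c(t)y(t)$ viewed as a zero-delay term (so (a1)--(a2) hold, with $\tau=\max_k\tau_k$), I run the same integral-equation estimate on the non-homogeneous problem with zero initial data. There the increment identity $y(\xi)-y(h_k(\xi))=\int_{h_k(\xi)}^\xi\dot y$ is valid without correction because $y$ is continuous across $t_0$ with $y(t_0)=0$, and one gets $\|y\|_{[t_0,\infty)}\le \text{const}\cdot\|f\|$ for every essentially bounded $f$; the Lemma then yields uniform exponential stability.

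The hard part will be the increment bound in the fundamental-function estimate near $t=s$. Because $Y(\cdot,s)$ jumps from $0$ to $1$ at $s$, when $h_k(\xi)<s$ the true increment is $Y(\xi,s)-Y(h_k(\xi),s)=Y(\xi,s)\approx 1$, \emph{not} $\int_{h_k(\xi)}^\xi\partial_\eta Y$, so the clean estimate $\tau_k\esssup|\partial_t Y|$ fails on the initial layer $[s,s+\tau]$ and a naive split would add a spurious $\sum_k\|d_k\|\tau_k$ mass. The saving observation, which I expect to be the crux of the argument, is that the decay built into $U$ uses the full coefficient $d-c$ whereas on $[s,s+\tau]$ the delayed terms with $h_k(\xi)<s$ contribute nothing to the dynamics; these two effects cancel exactly (one checks this on the constant-delay model $\dot y=-d\,y(t-\tau)$, where $U(t,s)$ and the inactive terms sum to the correct value $1$). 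Making this compensation rigorous --- either by combining the inactive terms with $U(t,s)$ rather than bounding them separately, or by a continuity/bootstrap argument started from $Y(s,s)=1$ --- is where the genuine work lies, and it is what allows the clean constant $K=(1-K_0)^{-1}$ to survive.
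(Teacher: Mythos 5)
Your main line of argument is the paper's own proof almost verbatim: the same rewriting of (\ref{3.2}) as $\dot y(t)=-[d(t)-c(t)]y(t)+\sum_k d_k(t)\int_{h_k(t)}^t\dot y(\xi)\,d\xi$, the same variation-of-constants representation against the kernel $e^{-\int_s^t(d-c)}$, the same three estimates (unit mass of $e^{-\int_s^t(d-c)}(d(s)-c(s))$, the increment bound $\tau_k\|\dot y\|_J$, and the derivative bound read off from the equation itself), leading to $\|y\|_J\le 1+K_0\|y\|_J$ on finite intervals $J=[t_0,t_1]$ uniformly in $t_1$, and the same appeal to Lemma~\ref{lemmaBP} applied to the zero-initial-data non-homogeneous problem for the stability claim. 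On the level of method you have reconstructed the intended proof.

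Where you diverge is your final paragraph, and the point you raise there is real, not spurious: when $h_k(\xi)<s$ the identity $Y(\xi,s)-Y(h_k(\xi),s)=\int_{h_k(\xi)}^\xi\partial_\eta Y(\eta,s)\,d\eta$ fails by exactly the unit jump of $Y(\cdot,s)$ at $\xi=s$, so the representation is off by $\sum_{k:h_k(\xi)<s}d_k(\xi)$ on the initial layer $[s,s+\tau]$; the paper applies the identity there without comment. But your hoped-for ``exact cancellation'' is not automatic. The corrected representation carries the extra term $\int_s^t e^{-\int_u^t(d-c)}\sum_k d_k(u)\mathbf{1}_{\{h_k(u)<s\}}\,du$, and this is absorbed by the leading term $e^{-\int_s^t(d-c)}$ (total at most $1$) precisely when $\sum_k d_k(u)\mathbf{1}_{\{h_k(u)<s\}}\le d(u)-c(u)$, e.g.\ when all $d_k\ge 0$ and $c\le 0$ --- which is exactly the sign structure of your constant-delay test model, and why the check succeeds there. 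Without sign assumptions, bounding the extra term via $|d_k|\le\|d_k/(d-c)\|(d-c)$ only gives $\|y\|_J\le 1+\sum_k\|d_k/(d-c)\|_{[t_0,\infty)}+K_0\|y\|_J$, i.e.\ a constant larger than the claimed $(1-K_0)^{-1}$; the alternative is to first prove exponential stability via Bohl--Perron (where the test solution vanishes at $t_0$, has no jump, and the increment identity is exact) and only afterwards bound $Y$, at the price of losing the explicit constant. So your proposal correctly isolates the one genuinely delicate step but leaves it as a conjecture; as submitted it does not close the argument for the stated constant $K=(1-K_0)^{-1}$ --- and, it should be said, neither does the paper's own proof, which passes over this point in silence.
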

\begin{proof}
For brevity of notations, we set $y(t)=Y(t,t_0)$. Then, $y$ satisfies (\ref{3.2}), where the initial value is $y(t_0)=1$, with the zero initial function.
Let $J=[t_0,t_1]$, where $t_1>t_0$ is arbitrary. Equality (\ref{3.2}) implies the estimate
\begin{equation}\label{3.3}
 \|\dot{y}\|_J\leq \left(\|c\|_{[t_0,\infty)}+\sum_{k=1}^m \|d_k\|_{[t_0,\infty)}\right)\|y\|_J.
\end{equation}
Equation (\ref{3.2}) can be written as
$$
\dot{y}(t)=-[d(t)-c(t)]y(t)+\sum_{k=1}^m d_k(t)\int_{h_k(t)}^t \dot{y}(\xi)d\xi.
$$
Integrating, we get for $y$
$$
y(t)=e^{-\int_{t_0}^t [d(\xi)-c(\xi)]d\xi}+\int_{t_0}^t e^{-\int_s^t [d(\xi)-c(\xi)]d\xi}[d(s)-c(s)]
\left [\sum_{k=1}^m \frac{d_k(s)}{d(s)-c(s)}\int_{h_k(s)}^s \dot{y}(\xi)d\xi\right]ds.
$$
Then, by the above equality, (\ref{3.3}) and the definition of $K_0$ in (\ref{3.2a}),
$$
\|y\|_J\leq 1+\sum_{k=1}^m \tau_k\left\|\frac{d_k}{d-c}\right\|_{[t_0,\infty)}\|\dot{y}\|_J
\leq 1+ K_0 \|y\|_J.
%
$$
Then
$ \|Y(t,t_0)\|_J\leq (1-K_0)^{-1}$, and the expression in 
the right-hand side 
does not depend on $t_1$. 

Hence 
 $ \displaystyle \|Y(t,t_0)\|_{[t_0,\infty)}\leq  (1-K_0)^{-1}$.
Again,  the number in the right-hand side 
does not depend on $t_0$. Thus estimate (\ref{3.1}) holds.

To prove exponential stability, we apply the Bohl-Perron theorem. 
Let $f:[t_0,\infty)\to {\mathbb R}$ be globally essentially bounded, and $y$ be a solution of  
\begin{equation}\label{3.5a}
\dot{y}(t)=c(t)y(t)-\sum_{k=1}^m d_k(t)y(h_k(t))+f(t), ~t \geq t_0, ~y(t)=0, t\leq t_0.
\end{equation}
Equality (\ref{3.5a}) implies
\begin{equation}\label{3.6}
 \|\dot{y}\|_J\leq \left(\|c\|_{[t_0,\infty)}+\sum_{k=1}^m \|d_k\|_{[t_0,\infty)}\right)\|y\|_J+\|f\|_{[t_0,\infty)}.
\end{equation}

Similarly to the above argument, equation (\ref{3.5a}) can be rewritten and integrated:
\begin{align*}
\dot{y}(t)= &-[d(t)-c(t)]y(t)+\sum_{k=1}^m d_k(t)\int_{h_k(t)}^t \dot{y}(\xi)d\xi+f(t),
\\
y(t)= &\int_{t_0}^t e^{-\int_s^t [d(\xi)-c(\xi)]d\xi}[d(s)-c(s)]
\left [\sum_{k=1}^m \frac{d_k(s)}{d(s)-c(s)}\int_{h_k(s)}^s \dot{y}(\xi)d\xi\right] \, ds
\\ & +\int_{t_0}^t e^{-\int_s^t [d(\xi)-c(\xi)]d\xi}[d(s)-c(s)]\frac{f(s)}{d(s)-c(s)} \, ds.
\end{align*}
Since $d(s)-c(s)\geq \alpha_0>0$ and $\|f\|_{[t_0,\infty)}<\infty$, we have $\|\frac{f}{d-c}\|_{[t_0,\infty)}<\infty$.
Then, by  (\ref{3.6}), 
$$
\|y\|_J  \leq \sum_{k=1}^m \tau_k\left\|\frac{d_k}{d-c}\right\|_{[t_0,\infty)}\|\dot{y}\|_J +
\left\|\frac{f}{d-c} \right\|_{[t_0,\infty)}
\leq  K_0 \|y\|_J + M_0, 
$$
where
$$
M_0=\sum_{k=1}^m \tau_k\left\|\frac{d_k}{d-c}\right\|_{[t_0,\infty)}
\|f\|_{[t_0,\infty)}+ \left\|\frac{f}{d-c} \right\|_{[t_0,\infty)}.
$$
Hence for the constant $ M=(1-K_0)^{-1} M_0>0$, we have $\|y\|_J\leq M$, where $M$ does not depend on the interval $J$.
Therefore $\|x\|_{[t_0,\infty)}<\infty$.
Appliing Lemma \ref{lemmaBP}, we conclude that (\ref{3.2}) is uniformly exponentially stable.
\end{proof}

\begin{remark}\label{remark3.1}
In Lemma~\ref{lemma3.1}, instead of $K_0$, we can take any number $K_1 \in (K_0,1)$.
\end{remark}

\begin{remark}\label{remark3.2}
Equation (\ref{3.2}) is uniformly exponentially stable
under the assumptions of the lemma.
In contrast to a known result on exponential stability of an equation $\dot{y}+c(t)y(t) + d(t)y(h(t))=0$
with a dominating  non-delay term $c(t) > |d(t)| \geq 0$, we consider the case $c(t)\leq 0$, $d(t) > c(t)$.
In this sense, the delay term is dominating over the non-delay term.
\end{remark}

\section{Main Results}

\subsection{Exponentially stable equations}

We start with an exponential estimate for an exponentially stable equation.

\begin{theorem}\label{theorem4.1}
Assume that there are constants $\lambda>0$ and $\alpha>0$ such that the following inequalities hold:
\begin{equation}\label{4.0}
a(t):=\sum_{k=1}^m e^{\lambda(t-h_k(t))}b_k(t)-\lambda\geq \alpha, \quad t \geq t_0, 
\end{equation}
\begin{equation}\label{4.0a}
M_1:=\left(\lambda+\sum_{k=1}^m e^{\lambda\tau_k}\|b_k\|_{[t_0,\infty)}\right)
\sum_{k=1}^m \tau_k \left\|\frac{a_k}{a}\right\|_{[t_0,\infty)}<1,
\end{equation}
where $a_k(t)=e^{\lambda(t-h_k(t))}b_k(t)$.

Then for the solution of problem ({2.1}),  the following estimate is valid
\begin{equation}\label{4.1}
|x(t)|\leq M_0e^{-\lambda(t-t_0)}\left[|x(t_0)|+\sum_{k=1}^m\frac{e^{\lambda \tau_k}-1}{\lambda} \|b_k\|_{[t_0,\infty)}\|\phi\|_{[t_0-\tau,t_0]}\right]+\frac{M_0}{\lambda}\|f\|_{[t_0,t]}, 
\end{equation}
where $M_0:=(1-M_1)^{-1}$. 
\end{theorem}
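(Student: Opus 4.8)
The plan is to reduce the problem to the auxiliary equation \rf{3.2} handled in Lemma~\ref{lemma3.1} by means of an exponential substitution, and then to read off \rf{4.1} from the solution representation \rf{2.2}. Concretely, I would set $x(t)=e^{-\lambda(t-t_0)}y(t)$. Differentiating and using $x(h_k(t))=e^{-\lambda(h_k(t)-t_0)}y(h_k(t))$, then multiplying \rf{2.1} through by $e^{\lambda(t-t_0)}$, the equation becomes
\begin{equation*}
\dot{y}(t)=\lambda\,y(t)-\sum_{k=1}^m a_k(t)\,y(h_k(t))+e^{\lambda(t-t_0)}f(t),\qquad a_k(t)=e^{\lambda(t-h_k(t))}b_k(t),
\end{equation*}
which is exactly \rf{3.2}/\rf{3.5a} with the constant non-delay coefficient $c(t)\equiv\lambda$, the delay coefficients $d_k(t)=a_k(t)$, and the same delay functions $h_k$. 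Crucially the substitution leaves the $h_k$ unchanged, so assumption (a2) carries over verbatim and Lemma~\ref{lemma3.1} is applicable after the hypotheses are checked.

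Next I would verify those hypotheses. With $c\equiv\lambda$ one has $d(t)-c(t)=\sum_k a_k(t)-\lambda=a(t)$, so condition \rf{4.0}, namely $a(t)\geq\alpha>0$, is precisely the sign requirement $d(t)-c(t)\geq\alpha_0$. For the smallness constant, the bound $0\leq t-h_k(t)\leq\tau_k$ gives $\|a_k\|_{[t_0,\infty)}\leq e^{\lambda\tau_k}\|b_k\|_{[t_0,\infty)}$, whence the quantity $K_0$ from \rf{3.2a} satisfies
\begin{equation*}
K_0=\Bigl(\lambda+\sum_{k=1}^m\|a_k\|_{[t_0,\infty)}\Bigr)\sum_{k=1}^m\tau_k\Bigl\|\frac{a_k}{a}\Bigr\|_{[t_0,\infty)}\leq M_1<1 .
\end{equation*}
Lemma~\ref{lemma3.1} then yields the uniform bound $|Y(t,s)|\leq(1-K_0)^{-1}\leq(1-M_1)^{-1}=M_0$ for the fundamental function $Y(t,s)$ of the transformed equation, valid for all $t\geq s\geq t_0$ (the conceptual content of Remark~\ref{remark3.1} being that $K_0$ may be replaced by $M_1$ throughout).

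Then I would transfer this bound to the original equation. A short computation shows that the two fundamental functions are related by $X(t,s)=e^{-\lambda(t-s)}Y(t,s)$: the right-hand side satisfies the homogeneous version of \rf{2.1} and the normalization at $t=s$, so uniqueness identifies it with $X(t,s)$. Consequently $|X(t,s)|\leq M_0\,e^{-\lambda(t-s)}$ for $t\geq s\geq t_0$. Inserting this into \rf{2.2}, the first term contributes $M_0 e^{-\lambda(t-t_0)}|x(t_0)|$; in the second term I use $|\phi(h_k(s))|\leq\|\phi\|_{[t_0-\tau,t_0]}$, $|b_k(s)|\leq\|b_k\|_{[t_0,\infty)}$, together with $\int_{t_0}^{t_0+\tau_k}e^{-\lambda(t-s)}\,ds=e^{-\lambda(t-t_0)}(e^{\lambda\tau_k}-1)/\lambda$; and in the third term $\int_{t_0}^t e^{-\lambda(t-s)}\,ds\leq 1/\lambda$ bounds the contribution by $(M_0/\lambda)\|f\|_{[t_0,t]}$. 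Summing the three contributions reproduces \rf{4.1} exactly.

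I expect the difficulties here to be bookkeeping rather than conceptual. The point that must be handled carefully is the passage $K_0\leq M_1$, which rests entirely on the elementary estimate $e^{\lambda(t-h_k(t))}\leq e^{\lambda\tau_k}$; note that \rf{4.0a} is deliberately stated with the coarser constant $M_1$ so that the detailed $h_k$-dependence is removed and the hypothesis becomes checkable from the data $b_k,\tau_k,\lambda$ alone. The second point worth stating cleanly is the identity $X(t,s)=e^{-\lambda(t-s)}Y(t,s)$, since the constant $M_0$ appearing in \rf{4.1} is inherited precisely from the uniform bound on $Y$ supplied by Lemma~\ref{lemma3.1}, and the exponential decay rate $\lambda$ comes entirely from the substitution factor.
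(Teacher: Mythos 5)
Your proposal is correct and follows essentially the same route as the paper: the substitution $x(t)=e^{-\lambda(t-t_0)}z(t)$, reduction to the auxiliary equation \rf{3.2} with $c\equiv\lambda$, $d_k=a_k$, verification of \rf{3.2a} via $\|a_k\|_{[t_0,\infty)}\le e^{\lambda\tau_k}\|b_k\|_{[t_0,\infty)}$ so that $K_0\le M_1<1$, the identity $X(t,s)=e^{-\lambda(t-s)}Z(t,s)$, and the three-term estimate from \rf{2.2}. Your write-up is in fact slightly more explicit than the paper's at the step where \rf{4.0a} is shown to imply \rf{3.2a}, which the paper asserts without the intermediate inequality.
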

\begin{proof}
Consider first the case $f \equiv 0$.
After the substitution $x(t)=e^{-\lambda(t-t_0)}z(t)$ into (\ref{2.1}), we get
\begin{equation}\label{4.3}
\dot{z}(t)=\lambda z(t)-\sum_{k=1}^m e^{\lambda(t-h_k(t))}b_k(t)z(h_k(t)).
\end{equation}
Equation (\ref{4.3}) has the form of (\ref{3.2}) with 
$$
c(t)=\lambda, ~~d_k(t)=e^{\lambda(t-h_k(t))}b_k(t),~~ d(t)=\sum_{k=1}^m e^{\lambda(t-h_k(t))}b_k(t). 
$$
Then in (\ref{4.0}), $a(t)$ corresponds to $d(t)-c(t)$.
Let $Z(t,s)$ be the fundamental function of (\ref{4.3}).
Inequalities (\ref{4.0}) and (\ref{4.0a}) imply  (\ref{3.2a}).  
By Lemma \ref{lemma3.1}, $|Z(t,s)|\leq M_0$. Let $X(t,s)$ be
a fundamental function  of (\ref{2.1}). Then, for $X(t,s)$  we have the exponential equality $X(t,s)=e^{-\lambda(t-s)}Z(t,s)$.
Hence $|X(t,s)| \leq M_0 e^{-\lambda(t-s)}$. By (\ref{2.2}), for the solution $x$ of problem ({2.1}) we have
\begin{align*}
|x(t)| \leq & ~ |X(t,t_0)||x(t_0)|+\sum_{k=1}^m \int_{t_0}^{t_0+\tau_k}|X(t,s)||b_k(s)||\phi(h_k(s))|ds
\\
\leq & ~M_0 e^{-\lambda(t-t_0)} |x(t_0)|+\frac{M_0}{\lambda} 
\sum_{k=1}^m \|b_k\|_{[t_0,\infty)} \left( e^{-\lambda (t-t_0-\tau_k)}
-e^{-\lambda(t-t_0)} \right)\|\phi\|_{[t_0-\tau,t_0]},
\end{align*}
which implies (\ref{4.1}) with $f \equiv 0$.

For the general case we apply (\ref{2.2}), the estimate for $X(t,s)$ and the inequalities
$$
\left| \int_{t_0}^t X(t,s) f(s)\, ds \right| 
\leq M_0 \|f\|_{[t_0,t]} \int_{t_0}^t e^{-\lambda(t-t_0)}ds \leq \frac{M_0}{\lambda} \|f\|_{[t_0,t]}.
$$
\end{proof}

\begin{remark}
Note that if the conditions of Theorem \ref{theorem4.1} hold,  (\ref{1.1}) is uniformly exponentially stable.
\end{remark}

\begin{remark}
Let us apply Theorem \ref{theorem4.1} to a non-delay equation
\begin{equation}\label{4.3a}
\dot{x}(t)+b(t)x(t)=f(t), ~~t\geq t_0,~~ x(t_0)=x_0,
\end{equation}
where $b(t)\geq b_0>0$. If we take $\lambda=b_0$, conditions  (\ref{4.0}) and (\ref{4.0a}) hold, $M_0=1$.
Estimate (\ref{4.1}) becomes
$$
|x(t)|\leq e^{-b_0(t-t_0)}|x_0|+\frac{1}{b_0}\|f\|_{[t_0,t]},
$$
where for constant positive $b$ and $f$ the equality holds.
Hence estimate  (\ref{4.1}) in some sense is sharp.
\end{remark}

The continuity of the functions $u(\lambda)=e^{\lambda \tau_k}$ which tend to one as $\lambda \to 0^+$ implies the following 
result.

\begin{corollary}\label{corollary4.1}
Assume that for some $\lambda_0>0$, $\alpha>0$,
\begin{equation*}
\sum_{k=1}^m e^{\lambda_0(t-h_k(t))}b_k(t)-\lambda_0 \geq \alpha,~~
\sum_{k=1}^m \|b_k\|_{[t_0,\infty)} \sum_{k=1}^m \tau_k\|b_k\|_{[t_0,\infty)} <\alpha.
\end{equation*}
Then equation (\ref{1.1}) is uniformly exponentially stable, and for some $\lambda \in (0,\lambda_0]$,
the solution of problem (\ref{2.1}) satisfies estimate (\ref{4.1}).
\end{corollary}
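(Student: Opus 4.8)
The plan is to read this corollary as the $\lambda\to 0^+$ limiting form of Theorem~\ref{theorem4.1}: the first inequality is precisely condition~(\ref{4.0}) evaluated at the particular value $\lambda=\lambda_0$, so the whole task reduces to producing a single $\lambda\in(0,\lambda_0]$ at which \emph{both} (\ref{4.0}) and (\ref{4.0a}) hold, after which Theorem~\ref{theorem4.1} delivers estimate~(\ref{4.1}) verbatim and the Remark following it gives uniform exponential stability of~(\ref{1.1}). Thus I would treat $\lambda$ as a free parameter ranging over $(0,\lambda_0]$ and track how the two conditions depend on it.

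First I would dispose of~(\ref{4.0a}). Whenever (\ref{4.0}) holds at a given $\lambda$ we have $a(t)\geq\alpha$, and since $|a_k(t)|=e^{\lambda(t-h_k(t))}|b_k(t)|\leq e^{\lambda\tau_k}\|b_k\|_{[t_0,\infty)}$, the pointwise bound $\|a_k/a\|_{[t_0,\infty)}\leq \alpha^{-1}e^{\lambda\tau_k}\|b_k\|_{[t_0,\infty)}$ follows. Substituting into the definition of $M_1$ gives
\[
M_1\leq \frac{1}{\alpha}\left(\lambda+\sum_{k=1}^m e^{\lambda\tau_k}\|b_k\|_{[t_0,\infty)}\right)\sum_{k=1}^m\tau_k\, e^{\lambda\tau_k}\|b_k\|_{[t_0,\infty)}=:g(\lambda).
\]
The map $g$ is continuous on $[0,\lambda_0]$ since each factor $e^{\lambda\tau_k}\to 1$ as $\lambda\to0^+$, and its value at the endpoint is $g(0)=\alpha^{-1}\bigl(\sum_k\|b_k\|_{[t_0,\infty)}\bigr)\bigl(\sum_k\tau_k\|b_k\|_{[t_0,\infty)}\bigr)$, which the second hypothesis forces to be strictly below $1$. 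By continuity there is $\lambda^\ast\in(0,\lambda_0]$ with $g(\lambda)<1$, hence $M_1<1$, for every $\lambda\in(0,\lambda^\ast]$.

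The step I expect to be the real obstacle is guaranteeing that (\ref{4.0}) itself survives at one of these small values of $\lambda$: condition~(\ref{4.0a}) only becomes available near $\lambda=0$, whereas (\ref{4.0}) is handed to us at $\lambda=\lambda_0$, and $a(t)=\sum_k e^{\lambda(t-h_k(t))}b_k(t)-\lambda$ is not monotone in $\lambda$ in general. I would control this by regarding $\lambda\mapsto\inf_{t}a(t)$ as a function on $[0,\lambda_0]$; the elementary bound $|e^{\lambda(t-h_k(t))}-e^{\lambda_0(t-h_k(t))}|\leq \tau_k e^{\lambda_0\tau_k}|\lambda-\lambda_0|$ shows $a(t)$ depends on $\lambda$ uniformly continuously in $t$, so this infimum is continuous and equals $\inf_t\sum_k b_k(t)$ at $\lambda=0$. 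The crux is therefore to show $\inf_t\sum_k b_k(t)>0$, i.e.\ that the positive lower bound propagates all the way down to small $\lambda$; this is exactly where the stability regime enters (the delay coefficients summing to a strictly positive function) and where the two hypotheses must be played off against each other, for instance via $\sum_k b_k(t)\geq \alpha+\lambda_0-\sum_k(e^{\lambda_0\tau_k}-1)\|b_k\|_{[t_0,\infty)}$ together with the smallness provided by the second hypothesis.

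Once a single $\lambda\in(0,\lambda^\ast]$ satisfying both (\ref{4.0}) and (\ref{4.0a}) is secured, the conclusion is immediate: Theorem~\ref{theorem4.1} applies for that $\lambda$ and yields estimate~(\ref{4.1}), while the Remark after Theorem~\ref{theorem4.1} gives the uniform exponential stability of~(\ref{1.1}).
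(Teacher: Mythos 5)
Your reading of the corollary as the $\lambda\to0^+$ limit of Theorem~\ref{theorem4.1} is exactly the paper's: the authors offer no proof beyond the sentence preceding the statement, namely that the factors $e^{\lambda\tau_k}$ tend to one as $\lambda\to0^+$, which is the content of your first two paragraphs. You have also put your finger on precisely the right weak point. The trouble is that your proposed repair does not close it, and in fact cannot. Your bound $\|a_k/a\|_{[t_0,\infty)}\leq\alpha^{-1}e^{\lambda\tau_k}\|b_k\|_{[t_0,\infty)}$ requires $a(t)\geq\alpha$ \emph{with the same $\alpha$ that appears in the second hypothesis}, at the small $\lambda$ you eventually select; showing merely that $\inf_t\sum_kb_k(t)>0$ is not enough, because if that infimum equals some $\alpha'<\alpha$ then your $g(0)$ becomes $(\alpha')^{-1}\bigl(\sum_k\|b_k\|_{[t_0,\infty)}\bigr)\bigl(\sum_k\tau_k\|b_k\|_{[t_0,\infty)}\bigr)$, which the second hypothesis no longer forces below $1$. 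The first hypothesis supplies $a\geq\alpha$ only at $\lambda=\lambda_0$, and nothing propagates that bound, with the same constant, down to $\lambda$ near $0$.

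The gap is genuine and the statement fails as written. Take $m=1$, $b_1\equiv1$, $h_1(t)=t-1$ (so $\tau_1=1$), $\lambda_0=1$, $\alpha=1.5$: the first hypothesis reads $e-1\approx1.718\geq1.5$ and the second reads $1<1.5$, both true. Yet for every $\lambda\in(0,1]$,
\[
M_1=\bigl(\lambda+e^{\lambda}\bigr)\,\frac{e^{\lambda}}{e^{\lambda}-\lambda}>1,
\]
since $(\lambda+e^{\lambda})e^{\lambda}-(e^{\lambda}-\lambda)=\lambda e^{\lambda}+e^{\lambda}(e^{\lambda}-1)+\lambda>0$; so condition (\ref{4.0a}) holds for no admissible $\lambda$, $M_0=(1-M_1)^{-1}$ is negative, and estimate (\ref{4.1}) cannot hold. (The equation $\dot x+x(t-1)=0$ is exponentially stable since $b\tau=1<\pi/2$, so only the quantitative half of the conclusion collapses --- but that is the half the corollary asserts through (\ref{4.1}).) A correct version needs either to assume the first inequality at the $\lambda$ actually used, or to tie the second hypothesis to $\inf_t\sum_kb_k(t)$ rather than to $\alpha$, in the spirit of Theorem~\ref{theorem4.2}. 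In short: your strategy is the intended one and your diagnosis of where it is weakest is accurate, but the step you flagged as ``the real obstacle'' is not completable from the stated hypotheses, and the same defect is present in the paper's own one-line justification.
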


Theorem~\ref{theorem4.1} and its corollary give only implicit conditions 
for the estimation of solutions of equation (\ref{1.1}).
The following theorem contains explicit conditions for the estimates.

\begin{theorem}\label{theorem4.2}
Assume that
\begin{equation}\label{4_star}
b_k(t)\geq 0, ~~b_0 := \liminf_{t\rightarrow\infty}\sum_{k=1}^m b_k(t)>0, 
~~\sum_{k=1}^m \|b_k\|_{[t_0,\infty)}\sum_{k=1}^m \tau_k\left\|\frac{b_k}{\sum_{i=1}^m b_i}\right\|_{[t_0,\infty)}<1. 
\end{equation}
Then 
there exists a unique solution $\lambda_0\in (0,b_0)$ of the equation
\begin{equation}\label{4c}
\left(\lambda+\sum_{k=1}^m e^{\lambda\tau_k}\|b_k\|_{[t_0,\infty)}\right)
\sum_{k=1}^m \tau_k e^{\lambda\tau_k} \left\|\frac{b_k}{\sum_{k=1}^m b_k-\lambda}\right\|_{[t_0,\infty)}=1.
\end{equation}
If~ $0<\lambda<\lambda_0$, the solution of problem (\ref{2.1}) satisfies (\ref{4.1}). 
\end{theorem}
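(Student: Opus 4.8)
The plan is to deduce Theorem~\ref{theorem4.2} from Theorem~\ref{theorem4.1}: I will show that for every $\lambda\in(0,\lambda_0)$ the explicit hypotheses (\ref{4_star}), together with the defining property of $\lambda_0$, force both structural conditions (\ref{4.0}) and (\ref{4.0a}) of Theorem~\ref{theorem4.1}, after which estimate (\ref{4.1}) follows at once. The work therefore splits into two tasks: first, locating $\lambda_0$; second, verifying (\ref{4.0})--(\ref{4.0a}) by elementary estimates on the exponential weights $e^{\lambda(t-h_k(t))}$.

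For the existence and uniqueness of $\lambda_0$, I would introduce the function
\begin{equation*}
G(\lambda):=\left(\lambda+\sum_{k=1}^m e^{\lambda\tau_k}\|b_k\|_{[t_0,\infty)}\right)\sum_{k=1}^m \tau_k e^{\lambda\tau_k}\left\|\frac{b_k}{\sum_{i=1}^m b_i-\lambda}\right\|_{[t_0,\infty)}
\end{equation*}
on the interval $[0,b_0)$ and argue that it is continuous and strictly increasing there. Monotonicity is immediate: each factor (namely $\lambda+\sum_k e^{\lambda\tau_k}\|b_k\|$, each $\tau_k e^{\lambda\tau_k}$, and each norm) is nonnegative and nondecreasing in $\lambda$, the last because raising $\lambda$ shrinks the positive denominator $\sum_i b_i-\lambda$ pointwise and hence raises the essential supremum. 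At $\lambda=0$ the value $G(0)$ is exactly the left-hand side of the third inequality in (\ref{4_star}), so $G(0)<1$. As $\lambda\uparrow b_0$, I would use $b_0=\liminf_{t\to\infty}\sum_k b_k(t)$ to exhibit $t$ at which $\sum_i b_i(t)$ is arbitrarily close to $b_0$ while some $b_k(t)$ stays bounded below by a positive constant; then $\sum_i b_i-\lambda\to 0$ and $G(\lambda)\to+\infty$. Continuity together with strict monotonicity and the intermediate value theorem then yield a unique root $\lambda_0\in(0,b_0)$ of (\ref{4c}).

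Next, fix $0<\lambda<\lambda_0$. The two-sided bound $1\le e^{\lambda(t-h_k(t))}\le e^{\lambda\tau_k}$, valid because $0\le t-h_k(t)\le\tau_k$, is the only analytic input needed. For (\ref{4.0}), writing $a_k(t)=e^{\lambda(t-h_k(t))}b_k(t)$ and using $b_k\ge0$, I get $a(t)=\sum_k a_k(t)-\lambda\ge\sum_k b_k(t)-\lambda$, and since $\lambda<\lambda_0<b_0$ the right-hand side is bounded below by a positive constant $\alpha$, so (\ref{4.0}) holds. For (\ref{4.0a}), the same bounds give $a_k(t)\le e^{\lambda\tau_k}b_k(t)$ and $a(t)\ge\sum_i b_i(t)-\lambda>0$, whence $\|a_k/a\|\le e^{\lambda\tau_k}\|b_k/(\sum_i b_i-\lambda)\|$; substituting into the definition of $M_1$ produces $M_1\le G(\lambda)$. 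Since $G$ is increasing with $G(\lambda_0)=1$, this gives $M_1\le G(\lambda)<1$, so (\ref{4.0a}) holds as well. Theorem~\ref{theorem4.1} now applies and delivers (\ref{4.1}), the nonhomogeneous term $f$ being already accounted for there.

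The routine part is the chain of weight estimates in the third paragraph. The delicate step is the existence argument, specifically justifying $G(\lambda)\to+\infty$ as $\lambda\uparrow b_0$ in the essential-supremum sense: because $b_0$ is a $\liminf$ rather than an essential infimum, one must check that the set on which $\sum_i b_i(t)$ approaches $b_0$, with some $b_k(t)$ bounded away from $0$, has positive measure, so that it actually influences the norm. The closely related point is securing the uniform lower bound $a(t)\ge\alpha$ on \emph{all} of $[t_0,\infty)$ rather than only on a tail; this interplay between the $\liminf$ and the essential supremum over $[t_0,\infty)$ is where I expect the proof to require the most care.
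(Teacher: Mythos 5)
Your proposal follows essentially the same route as the paper: define the increasing function $g(\lambda)$ equal to the left-hand side of (\ref{4c}), observe $g(0)<1$ by (\ref{4_star}) and $g(\lambda)\to\infty$ as $\lambda\uparrow b_0$ to obtain the unique root $\lambda_0$, then use $1\le e^{\lambda(t-h_k(t))}\le e^{\lambda\tau_k}$ and monotonicity to verify (\ref{4.0})--(\ref{4.0a}) for $0<\lambda<\lambda_0$ and invoke Theorem~\ref{theorem4.1}. The two points you flag as delicate --- the blow-up of the essential-supremum norm as $\lambda\uparrow b_0$ and the uniform positivity of $\sum_k b_k(t)-\lambda$ on all of $[t_0,\infty)$ rather than only on a tail --- are treated no more carefully in the paper, which simply asserts them.
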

\begin{proof}
For $0<\lambda<b_0$, where $b_0$ is defined in (\ref{4_star}), denote 
$$
g(\lambda):=\left(\lambda+\sum_{k=1}^m e^{\lambda\tau_k}\|b_k\|_{[t_0,\infty)}\right)
\sum_{k=1}^m \tau_k e^{\lambda\tau_k} \left\|\frac{b_k}{\sum_{k=1}^m b_k -\lambda}\right\|_{[t_0,\infty)}.
$$
This function is well defined as $\sum_{k=1}^m b_k -\lambda > 0$ for $\lambda \in (0,b_0)$, the terms 
$e^{\lambda\tau_k}$ increase in $\lambda$, while $\sum_{k=1}^m b_k -\lambda$ in the denominator is positive and decreasing, leading to the increase in the fraction, thus $g$ is monotone increasing in $\lambda$.
Since by (\ref{4_star}), $g(0)<1$, $g(b_0)=\infty$ and $g$ is monotone increasing, the equation $g(\lambda)=1$ has a unique solution
$\lambda\in (0,b_0)$, 
which is denoted by $\lambda_0$.  By the definition of $g$, $\displaystyle \left|  \sum_{k=1}^m b_k(t)-\lambda_0 \right|
\geq \varepsilon b_k(t)$ for $k=1, \dots, m$ and some $\varepsilon>0$.
Let $\lambda \in (0,\lambda_0)$. Then
$$
\left|\frac{b_k(t)}{\sum_{k=1}^m e^{\lambda(t-h_k(t))}b_k(t)-\lambda}\right|
=\frac{b_k(t)}{\sum_{k=1}^m e^{\lambda(t-h_k(t))}b_k(t)-\lambda}
<\left\|\frac{b_k(t)}{\sum_{k=1}^m b_k(t)-\lambda_0}\right\|_{[t_0,\infty)}.
$$
Hence
$$
\left(\lambda+\sum_{k=1}^m e^{\lambda\tau_k}\|b_k\|_{[t_0,\infty)}\right)
\sum_{k=1}^m \tau_k e^{\lambda\tau_k} \left\|\frac{b_k}{a}\right\|_{[t_0,\infty)}< g(\lambda_0)=1.
$$
By Theorem~\ref{theorem4.1},  the solution of problem (\ref{2.1}) satisfies (\ref{4.1}).
\end{proof}

\begin{corollary}\label{corollary4.3}
Assume that 
$b_k(t)\equiv b_k>0$, $k=1,\dots,m$, 
$\displaystyle b_0 :=\sum_{k=1}^m b_k$ 
and $\displaystyle \sum_{k=1}^m \tau_k b_k<1$.
Then there is a unique solution $\lambda_0\in (0,b_0)$ of 
$$
\left(\lambda+\sum_{k=1}^m e^{\lambda\tau_k}b_k\right)
\sum_{k=1}^m \tau_k e^{\lambda\tau_k} \frac{b_k}{\sum_{k=1}^m b_k-\lambda}=1.
$$
If~ $0<\lambda<\lambda_0$, the solution of problem (\ref{2.1}) satisfies (\ref{4.1}), , where
$\|b_k\|$ is replaced by $b_k$.
\end{corollary}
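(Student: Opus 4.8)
The plan is to read Corollary~\ref{corollary4.3} as the specialization of Theorem~\ref{theorem4.2} to constant coefficients $b_k(t)\equiv b_k$, so that the proof reduces entirely to checking that the hypotheses (\ref{4_star}) hold and that the essential-supremum norms in (\ref{4c}) collapse to constants.

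First I would verify (\ref{4_star}). Each $b_k>0$ gives $b_k(t)=b_k\geq 0$, the first requirement. Since $\sum_{k=1}^m b_k(t)\equiv\sum_{k=1}^m b_k$ is constant, its limit inferior is $b_0=\sum_{k=1}^m b_k>0$, the second requirement. For the third, I would use $\|b_k\|_{[t_0,\infty)}=b_k$ and $\bigl\|b_k/\sum_{i=1}^m b_i\bigr\|_{[t_0,\infty)}=b_k/b_0$, so that the product in (\ref{4_star}) equals
$$
\sum_{k=1}^m b_k\cdot\sum_{k=1}^m\tau_k\frac{b_k}{b_0}=b_0\cdot\frac{1}{b_0}\sum_{k=1}^m\tau_k b_k=\sum_{k=1}^m\tau_k b_k<1
$$
by assumption. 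Hence all three conditions of (\ref{4_star}) are met.

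Next I would observe that every norm in Theorem~\ref{theorem4.2} simplifies under constant coefficients: $\|b_k\|_{[t_0,\infty)}=b_k$, while for $\lambda\in(0,b_0)$ the quantity $\sum_{k=1}^m b_k-\lambda=b_0-\lambda>0$ is a positive constant, whence $\bigl\|b_k/(\sum_{k=1}^m b_k-\lambda)\bigr\|_{[t_0,\infty)}=b_k/(b_0-\lambda)$. Substituting these into (\ref{4c}) reproduces exactly the equation stated in the corollary. Theorem~\ref{theorem4.2} then furnishes the unique root $\lambda_0\in(0,b_0)$, and for every $\lambda\in(0,\lambda_0)$ it guarantees that the solution of (\ref{2.1}) obeys (\ref{4.1}) with each $\|b_k\|_{[t_0,\infty)}$ replaced by $b_k$.

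I expect no real obstacle beyond the bookkeeping of this norm collapse; the only step needing a moment of care is the cancellation in the third hypothesis of (\ref{4_star}), where the factor $b_0=\sum_{k=1}^m b_k$ cancels the common denominator to leave the clean condition $\sum_{k=1}^m\tau_k b_k<1$. Once this is verified, existence and uniqueness of $\lambda_0$ together with the estimate transfer verbatim from Theorem~\ref{theorem4.2}.
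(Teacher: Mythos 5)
Your proposal is correct and matches the paper's intent exactly: the corollary is stated without a separate proof precisely because it is the specialization of Theorem~\ref{theorem4.2} to constant coefficients, where all the essential-supremum norms collapse to the constants themselves and the third condition of (\ref{4_star}) reduces to $\sum_{k=1}^m \tau_k b_k<1$ after the cancellation you describe. Nothing is missing.
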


\subsection{The equation is not exponentially stable}

We proceed to the case when the equation in (\ref{2.1}) is not exponentially stable, or its asymptotic behaviour is unknown.

\begin{theorem}\label{theorem4.3}
Let there exist $\lambda>0$ and $\alpha>0$ such that
\begin{equation}\label{4.4}
a(t):=\sum_{k=1}^m e^{-\lambda(t-h_k(t))}b_k(t)+\lambda\geq \alpha,
\end{equation}
\begin{equation}\label{4.4c}
M_2:=\left(\lambda+\sum_{k=1}^m e^{-\lambda\delta_k}\|b_k\|_{[t_0,\infty)}\right)
\sum_{k=1}^m \tau_k  \left\|\frac{a_k}{a}\right\|_{[t_0,\infty)}<1,
\end{equation}
where $a_k(t)=e^{-\lambda(t-h_k(t))}b_k(t)$.
Then the solution of problem (\ref{2.1})  satisfies
\begin{equation}\label{4.5}
|x(t)|\leq M_0e^{\lambda(t-t_0)}\left[|x(t_0)|+\sum_{k=1}^m\frac{1-e^{-\lambda \tau_k}}{\lambda}
\|b_k\|_{[t_0,\infty)}\|\phi\|_{[t_0-\tau,t_0]}+\frac{\|f\|_{[t_0,t]}}{\lambda}\right],
\end{equation}
where $M_0:=(1-M_2)^{-1}$.
\end{theorem}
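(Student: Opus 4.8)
The plan is to mirror the proof of Theorem~\ref{theorem4.1}, but with the exponential substitution carrying a positive growth rate. First I would set $x(t)=e^{\lambda(t-t_0)}z(t)$ in (\ref{2.1}); differentiating and dividing through by $e^{\lambda(t-t_0)}$ converts the equation into
\[
\dot{z}(t)=-\lambda z(t)-\sum_{k=1}^m e^{-\lambda(t-h_k(t))}b_k(t)\,z(h_k(t))+e^{-\lambda(t-t_0)}f(t),
\]
which for $f\equiv 0$ has the form of the auxiliary equation (\ref{3.2}) with $c(t)=-\lambda$, $d_k(t)=e^{-\lambda(t-h_k(t))}b_k(t)$, and $d(t)=\sum_k d_k(t)$. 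With these identifications $d(t)-c(t)=a(t)$ is exactly the quantity appearing in (\ref{4.4}), so the hypothesis $a(t)\geq\alpha>0$ supplies the lower bound required by Lemma~\ref{lemma3.1}.

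The second step is to verify that (\ref{4.4c}) delivers the smallness condition $K_0<1$ of (\ref{3.2a}). Here the key estimate, and the one point where this case genuinely differs from Theorem~\ref{theorem4.1}, is the bound on $\|d_k\|$. Since the exponent is now negative, $e^{-\lambda(t-h_k(t))}$ is maximized when the delay $t-h_k(t)$ is smallest, namely at its lower bound $\delta_k$ from (a2); hence $\|d_k\|_{[t_0,\infty)}\leq e^{-\lambda\delta_k}\|b_k\|_{[t_0,\infty)}$, so that $\|c\|_{[t_0,\infty)}+\sum_k\|d_k\|_{[t_0,\infty)}\leq \lambda+\sum_k e^{-\lambda\delta_k}\|b_k\|_{[t_0,\infty)}$. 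Together with the identity $d_k/(d-c)=a_k/a$, this shows $K_0\leq M_2<1$, and Lemma~\ref{lemma3.1} then yields $|Z(t,s)|\leq(1-K_0)^{-1}\leq(1-M_2)^{-1}=M_0$ for the fundamental function $Z$ of the transformed equation.

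Third, I would translate this bound back to the original fundamental function through the relation $X(t,s)=e^{\lambda(t-s)}Z(t,s)$, giving $|X(t,s)|\leq M_0 e^{\lambda(t-s)}$. Finally I would insert this into the solution representation (\ref{2.2}) and estimate the three resulting terms. The initial-value term is immediate; the history term requires $\int_{t_0}^{t_0+\tau_k}e^{\lambda(t-s)}\,ds=\lambda^{-1}e^{\lambda(t-t_0)}(1-e^{-\lambda\tau_k})$, which produces the middle sum in (\ref{4.5}); and the forcing term uses $\int_{t_0}^t e^{\lambda(t-s)}\,ds\leq\lambda^{-1}e^{\lambda(t-t_0)}$ to produce the $\|f\|$ contribution.

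I expect no conceptual obstacle, since the argument is structurally identical to Theorem~\ref{theorem4.1}; the only place demanding care is the sign bookkeeping in the exponentials. In particular, one must remember to bound the delay-exponential by its value at $\delta_k$ rather than $\tau_k$ (the reversal forced by the negative exponent), and to check that the \emph{growing} factor $e^{\lambda(t-s)}$ integrates to the stated closed forms, so that the growth bound (\ref{4.5}) with the sign of $\lambda$ flipped relative to (\ref{4.1}) emerges cleanly.
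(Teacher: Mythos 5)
Your proposal is correct and follows essentially the same route as the paper: the substitution $x(t)=e^{\lambda(t-t_0)}z(t)$, identification with the auxiliary equation \eqref{3.2} via $c=-\lambda$, $d_k=e^{-\lambda(t-h_k(t))}b_k$, the bound $\|d_k\|\leq e^{-\lambda\delta_k}\|b_k\|$ giving $K_0\leq M_2<1$, and the transfer $X(t,s)=e^{\lambda(t-s)}Z(t,s)$ fed into the representation \eqref{2.2}. The two points you flag as needing care (the $\delta_k$ versus $\tau_k$ reversal and the integrals of the growing exponential) are exactly the points where the paper's proof differs from that of Theorem~\ref{theorem4.1}, and you handle both correctly.
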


\begin{proof}
First, let $f \equiv 0$.
After substituting $x(t)=e^{\lambda(t-t_0)}z(t)$ into (\ref{2.1}), we get
\begin{equation}\label{4.7}
\dot{z}(t)=-\lambda z(t)-\sum_{k=1}^m e^{-\lambda(t-h_k(t))}b_k(t)z(h_k(t)).
\end{equation}
Equation (\ref{4.7}) has the form of (\ref{3.2}), where 
$$
c(t)=-\lambda, ~d_k(t)=e^{-\lambda(t-h_k(t))}b_k(t), ~d(t)=\sum_{k=1}^m e^{-\lambda(t-h_k(t))}b_k(t), ~a(t):=d(t)-c(t).
$$
Let $Z(t,s)$ be the fundamental function of equation (\ref{4.7}).
By (\ref{4.4}),  $d(t)-c(t)\geq \alpha$.

Inequality (\ref{4.4c}) implies 
$$
K_0=\left(\|c\|_{[t_0,\infty)}+\sum_{k=1}^m \|d_k\|_{[t_0,\infty)}\right)
\sum_{k=1}^m \tau_k\left\|\frac{d_k}{d-c}\right\|_{[t_0,\infty)}\leq M_2<1.
$$

By Lemma \ref{lemma3.1} and Remark \ref{remark3.1}, $|Z(t,s)|\leq M_0$.
For the fundamental function  of (\ref{2.1}) we have $X(t,s)=e^{\lambda(t-s)}Z(t,s)$.
Hence $|X(t,s)| \leq M_0 e^{\lambda(t-s)}$. By (\ref{2.2}), the solution $x$ of problem (\ref{2.1}) satisfies
\begin{align*}
|x(t)|\leq & ~ |X(t,t_0)|~|x(t_0)|+\sum_{k=1}^m \int_{t_0}^{t_0+\tau_k}|X(t,s)|~|b_k(s)|~|\phi(h_k(s))| \,ds
\\
\leq & ~ M_0 e^{\lambda(t-t_0)} |x(t_0)| - \frac{M_0}{\lambda} 
\sum_{k=1}^m \|b_k\|_{[t_0,\infty)} \left( e^{\lambda (t-t_0-\tau_k)}
-e^{\lambda(t-t_0)} \right)\|\phi\|_{[t_0-\tau,t_0]},
\end{align*}
which implies (\ref{4.5}) with $f \equiv 0$.
For the general case, we apply 
\begin{align*}
&~\left| \int_{t_0}^t X(t,s) f(s)\, ds \right| \leq \|f\|_{[t_0,t]} \int_{t_0}^t M_0 e^{\lambda(t-s)} ds
\leq \frac{M_0}{\lambda} e^{\lambda (t-t_0)}\|f\|_{[t_0,t]}.
\end{align*}
\end{proof}

\begin{remark}
Note that for $m=1$, $h_1(t)\equiv t$, $f \equiv 0$, $-b_0 \leq b_1(t) \leq 0$ almost everywhere, where $b_0>0$, for a 
solution $x$ of (\ref{2.1}) the inequality 
$$ x(t)=x(t_0) \exp\left\{ -\int_{t_0}^t b_1(s)\,ds \right\} \leq |x(t_0)| e^{b_0(t-t_0)}$$
holds. For $\lambda=-b_0>0$, Theorem~\ref{theorem4.3} implies 
this estimate which is  sharp for $b_1 \equiv -b_0$.  
\end{remark}

\begin{remark}\label{remark5}
By  \cite[Corollary B.1]{ABBD}, for $X(t,s)$, which is a fundamental function of (\ref{1.1}), the inequality
$$
|X(t,s)|\leq \exp \left\{ \int_s^t \sum_{k=1}^m |b_k(\xi)|d\xi \right\}
$$
holds. 
Hence  $\displaystyle \sum_{k=1}^m |b_k(t)|\leq \lambda_0$ implies inequality (\ref{4.5}) for the solution of problem (\ref{2.1}), where $M_0=1$, $\lambda=\lambda_0$. 
Theorem~\ref{theorem4.3} allows to obtain estimates for solutions of problem (\ref{2.1}) with 
smaller than $\lambda_0$ exponents, see Examples~\ref{example2} and \ref{example2a}.
\end{remark}

\begin{corollary}\label{corollary4.1a}
Assume that there are constants $\lambda>0$ and $\alpha>0$ such that
\begin{equation}\label{4.4_1}
a(t):=\sum_{k=1}^m e^{-\lambda(t-h_k(t))}b_k(t)+\lambda\geq \alpha
\end{equation}
and
\begin{equation}\label{4.4c_1}
M_3:=\left(\lambda+\sum_{k=1}^m e^{-\lambda\delta_k}\|b_k\|_{[t_0,\infty)}\right)
\sum_{k=1}^m \tau_k e^{-\lambda\delta_k} \|b_k\|_{[t_0,\infty)}<\alpha.
\end{equation}
Then for the solution of problem (\ref{2.1}),  estimate  (\ref{4.5}) holds,
where $M_0:=\frac{\alpha}{\alpha-M_3}$.
\end{corollary}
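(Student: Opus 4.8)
The plan is to deduce this corollary directly from Theorem~\ref{theorem4.3} by showing that the explicit, sign-free hypothesis (\ref{4.4c_1}) forces the implicit hypothesis (\ref{4.4c}) used there, and then by comparing the two resulting constants. Since condition (\ref{4.4_1}) is literally identical to (\ref{4.4}), the only things to verify are that $M_2 < 1$ and that the constant $(1-M_2)^{-1}$ appearing in Theorem~\ref{theorem4.3} may be replaced by $\alpha/(\alpha-M_3)$ in estimate (\ref{4.5}).

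The key estimate comes from assumption (a2): for every $t \geq t_0$ and each $k$ one has $t - h_k(t) \geq \delta_k$, and since $\lambda > 0$ this yields $e^{-\lambda(t-h_k(t))} \leq e^{-\lambda\delta_k}$. Because the exponential factor is positive, this gives $|a_k(t)| = e^{-\lambda(t-h_k(t))}|b_k(t)| \leq e^{-\lambda\delta_k}|b_k(t)|$ irrespective of the sign of $b_k$, whence $\|a_k\|_{[t_0,\infty)} \leq e^{-\lambda\delta_k}\|b_k\|_{[t_0,\infty)}$. Combining this with the lower bound $a(t) \geq \alpha$ furnished by (\ref{4.4_1}), I obtain
$$
\left\|\frac{a_k}{a}\right\|_{[t_0,\infty)} \leq \frac{1}{\alpha}\, e^{-\lambda\delta_k}\|b_k\|_{[t_0,\infty)}.
$$
Substituting this bound into the definition of $M_2$ in (\ref{4.4c}) and identifying the resulting product with $M_3/\alpha$, I conclude $M_2 \leq M_3/\alpha$. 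Hypothesis (\ref{4.4c_1}) asserts $M_3 < \alpha$, so $M_2 \leq M_3/\alpha < 1$, and both conditions of Theorem~\ref{theorem4.3} are satisfied; hence (\ref{4.5}) holds with the constant $(1-M_2)^{-1}$.

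Finally, to obtain the stated constant I would invoke the monotonicity of $r \mapsto (1-r)^{-1}$ on $(0,1)$: from $M_2 \leq M_3/\alpha$ it follows that $1 - M_2 \geq (\alpha - M_3)/\alpha > 0$, so $(1-M_2)^{-1} \leq \alpha/(\alpha - M_3) =: M_0$. Since (\ref{4.5}) is an upper bound for $|x(t)|$, enlarging the multiplicative constant from $(1-M_2)^{-1}$ to $M_0$ only weakens the inequality and therefore preserves its validity, completing the argument. I anticipate no genuine obstacle here—every step is an elementary majorization—and the only points that require attention are the correct orientation of the inequality $e^{-\lambda(t-h_k(t))} \leq e^{-\lambda\delta_k}$ (which relies on $\delta_k$ being a \emph{lower} bound for the delay) and the monotonicity invoked in the last step to weaken the constant.
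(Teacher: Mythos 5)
Your proposal is correct and follows exactly the route the paper intends: the corollary is stated without a separate proof as an immediate consequence of Theorem~\ref{theorem4.3}, obtained by bounding $\|a_k/a\|_{[t_0,\infty)}\le e^{-\lambda\delta_k}\|b_k\|_{[t_0,\infty)}/\alpha$ via $t-h_k(t)\ge\delta_k$ and $a(t)\ge\alpha$, so that $M_2\le M_3/\alpha<1$ and $(1-M_2)^{-1}\le\alpha/(\alpha-M_3)$. All steps, including the sign-independence of the majorization and the final enlargement of the constant, are handled correctly.
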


\section{Examples}

In this section, we illustrate Theorems~\ref{theorem4.1}, \ref{theorem4.2} and \ref{theorem4.3} with examples. In Example~\ref{example1}, we construct an exponential estimate with a negative exponent using explicit conditions of Theorem~\ref{theorem4.2}.  Examples~\ref{example2} and \ref{example2a} apply Theorem~\ref{theorem4.3} to an unstable equation. 
In Example~\ref{example3}, we consider a delay equation  with oscillating coefficients and two terms. 
We apply Theorem~\ref{theorem4.1} to establish uniform exponential stability of this equation,
and construct an exponential estimate for solutions of the equation.

\begin{example}
\label{example1}
For the non-homogeneous equation with one delay
\begin{equation}\label{ex1eq1}
\dot{x}(t)+  0.2 (2-\sin t)x \left( t- \frac{2-\sin t}{12} \right) =f(t),~t\geq 0,
\end{equation}
we calculate
$$
\tau = \frac{1}{12} \max_t (2-\sin t)=0.25, ~~\| b\|_{[0,\infty)}= 0.2 \max_t (2-\sin t)= 0.6.
$$
We will apply here Theorem~\ref{theorem4.2}. 

To find $\lambda_0$ we first consider the function
$w(t)=\frac{b(t)}{b(t)-\lambda}=\frac{0.2(2-\sin t)}{0.2(2-\sin t)-\lambda}$, where $0<\lambda<0.2$.
The critical values of this function are solutions of the equation $\cos t=0$. Hence 
$\|w\|_{[t_0,\infty)}=w(\frac{\pi}{2})=\frac{1}{1-5\lambda}$.

Equation (\ref{4c}) for (\ref{ex1eq1}) has the form
$$
\left(\lambda+0.6 e^{\lambda/4}\right)\frac{1}{4} e^{\lambda/4} \, \frac{1}{1-5\lambda}=1.
$$
Numerically we compute the solution of this equation $\lambda_0 \approx 0.159229$, 
the assumptions of Theorem~\ref{theorem4.2} hold for
$\lambda<0.159229$, say, for $\lambda=0.15<\lambda_0$.
It is possible to check numerically that for $\lambda=0.15<\lambda_0$,
$$ a(t) := 0.2 (2-\sin t) e^{(2-\sin t) \lambda/12} - \lambda \geq \alpha > 0.05,$$
$$
\left\| \frac{b}{a}\right\|_{[0,\infty)}= \left\| \frac{ 0.2(2-\sin t)}{0.2 (2-\sin t) e^{(2-\sin t) \lambda/12} - 
\lambda}
\right\|_{[0,\infty)} < 3.85 < 4.
$$
Since
$\displaystyle M_1=
\left( \lambda+e^{\lambda \tau} \| b\|_{[0,\infty)} \right) \tau e^{\lambda \tau} \left\| 
\frac{b}{a}\right\|_{[0,\infty)} \leq
\left( 0.15+ 0.6 e^{0.15 \cdot 0.25} \right) 0.25 e^{0.15 \cdot 0.25} \cdot 4 < 0.803 <1,
$
the assumptions of Theorem~\ref{theorem4.1} are also satisfied, $M_0<1/(1-0.803)<5.1$, and exponential estimates 
imply. We have $\displaystyle \| b\|_{[0,\infty)} (e^{\lambda \tau}-1)/\lambda<0.153$,
thus
the solution of (\ref{ex1eq1}) with $x(0)=1$,     
$\phi(t)=\sin(20 t)-1$, $\|\phi\|_{[-0.25,0]}=2$, $f\equiv 0$
satisfies $$ |x(t)| \leq 5.1  e^{-0.15 t} \big[ |x(0)| + 0.153  \|\phi\|_{[-0.25,0]}  \big]
\leq 5.1 \cdot 1.306 e^{-0.15 t} \leq 6.67 e^{-0.15 t}.$$
For  $f(t) \equiv 0$,
the comparison with the numerical solution is illustrated in Fig.~\ref{figure1}, left.

Next, apply Theorem~\ref{theorem4.1}, where the right-hand side is $f(t)=0.05 (1+\sin(2t))$, $\|f \|_{[0,\infty)}=0.1$,
$M_0/\lambda \|f \|_{[0,\infty)} \leq 0.1~5.1/0.15= 3.4$, and we get $\displaystyle |x(t)| \leq 6.67 e^{-0.15 t}+ 
3.4$. See Fig.~\ref{figure1}, right, for the comparison with numerical results.


\begin{figure}[ht]
\centering
\includegraphics[scale=0.4]{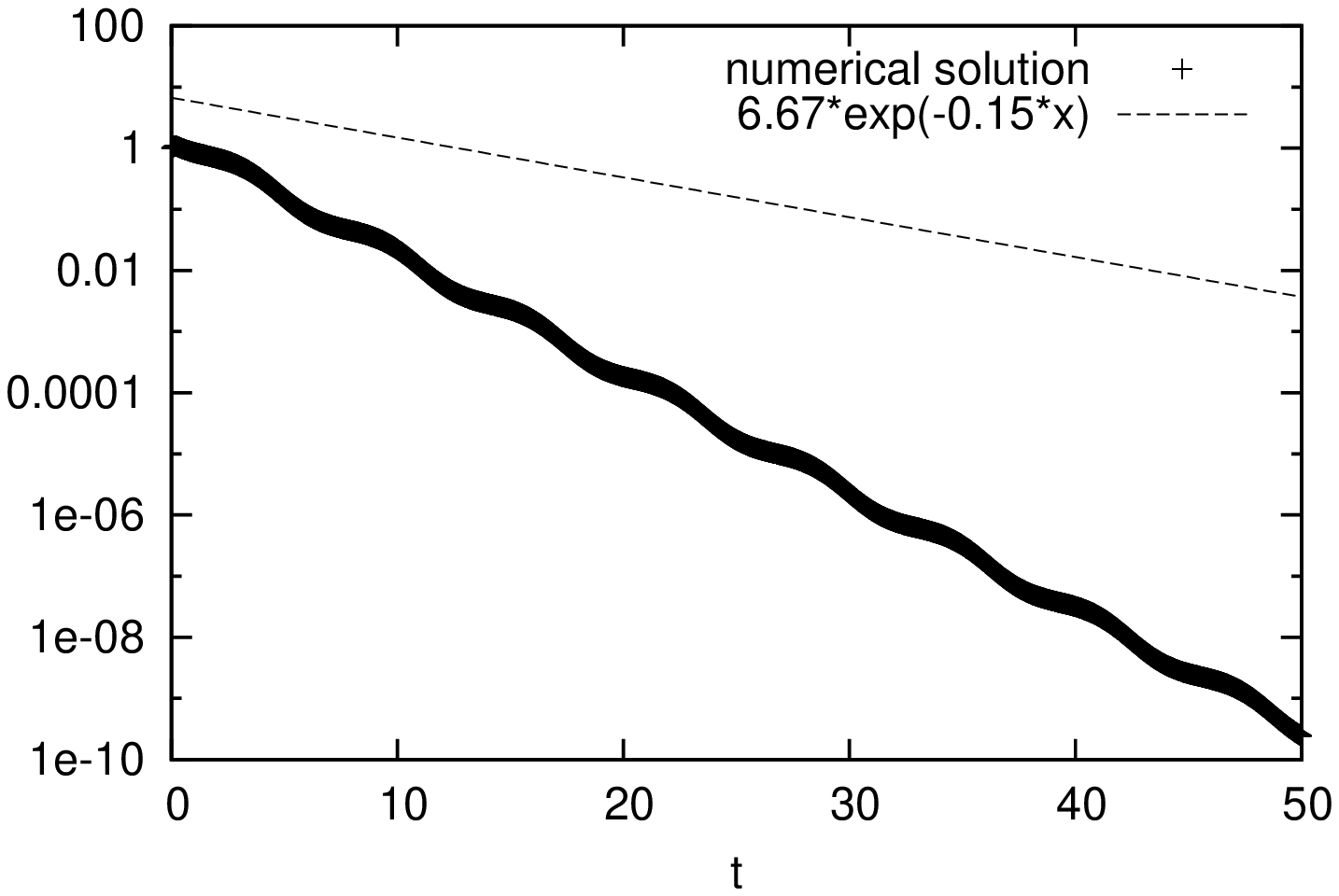}~~~~~~~~~
\includegraphics[scale=0.4]{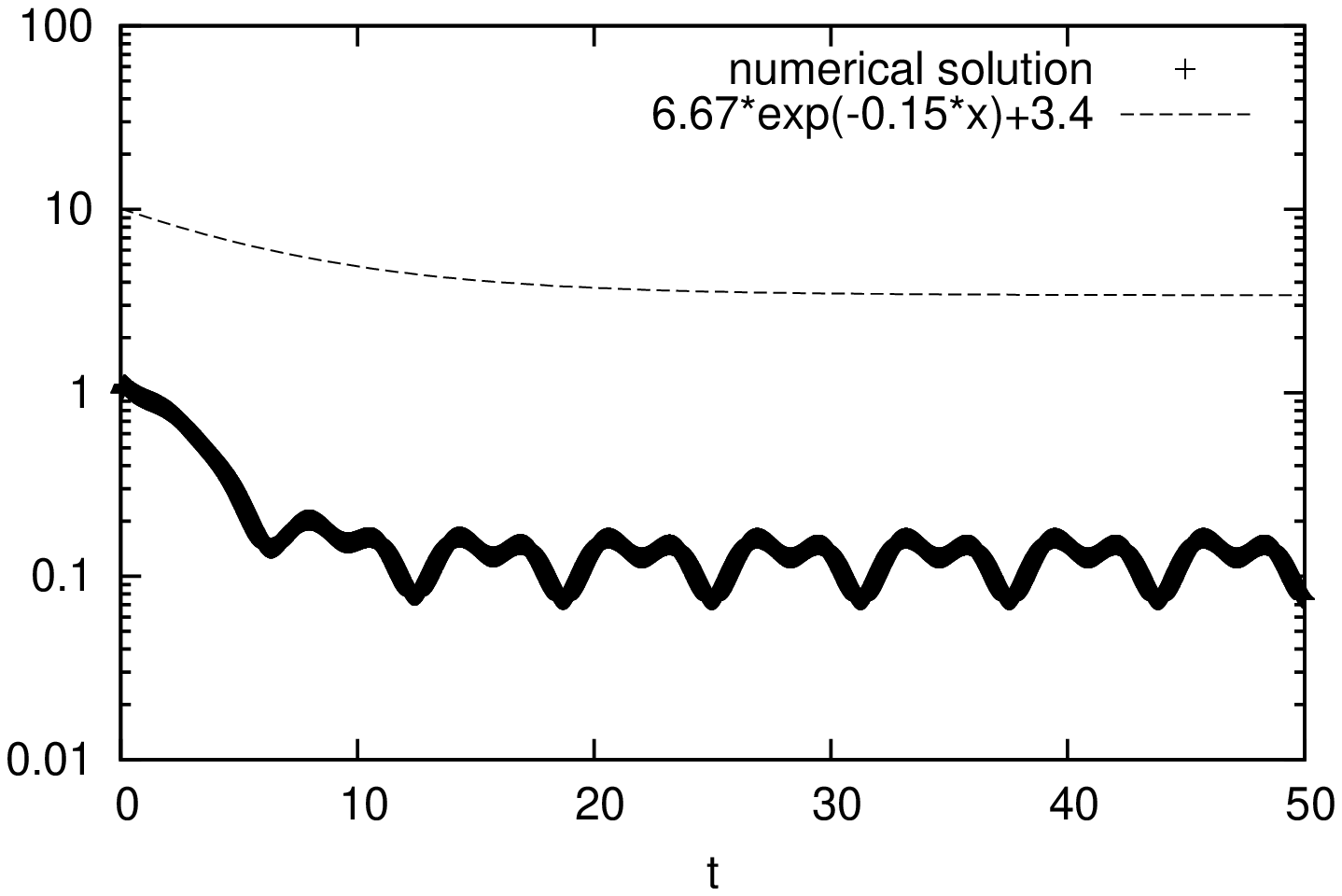}
\caption{The absolute value of the numerical solution of (\protect{\ref{ex1eq1}}) with $x(0)=1$, 
$\phi(t)=\sin(20 t)-1$ 
compared to the theoretical estimate from Theorem~\protect{\ref{theorem4.1}} 
for (left) $f\equiv 0$; (right) $f(t)=0.05 (1+\sin(2t))$. The scale in
$x$ is logarithmic.}
\label{figure1}
\end{figure}  
\end{example}


\begin{example}
\label{example2}
As it is well known, the autonomous equation 
\begin{equation}
\label{ex_add_0}
\dot{x}(t)+2x(t-1)=0
\end{equation}
is unstable.
Let $a(t)=\lambda+2e^{-\lambda}=\alpha$.
The assumptions of Corollary~\ref{corollary4.1a} are satisfied whenever
$$\frac{M_2}{\alpha}=\left( \lambda+2e^{-\lambda} \right) e^{-\lambda} \frac{2}{\lambda+2e^{-\lambda}}=2e^{-\lambda} 
<1,$$
or $\lambda > \ln 2 \approx  0.693147$. Take $\lambda= 0.8 > \ln 2$, then $\alpha=2e^{-\lambda} + \lambda > 1.6986$, 
$M_2=2(\lambda+2e^{-\lambda})e^{-\lambda}<1.527<\alpha$, $\displaystyle M_0=\alpha/(\alpha-M_2)<10$. In
particular, $|x(t)|
\leq 10 e^{0.8 t}|x(0)|$ for the zero initial function.
By Remark~\ref{remark5}, we can get  $|x(t)| \leq e^{2 t}$, which is a less sharp estimate for any $t \geq 1.92$.
\end{example}

\begin{example}
\label{example2a}
Further, consider the problem 
\begin{equation}\label{ex_add_1}
\dot{x}(t)+  0.2 (2-\sin t)x \left( t- 15 + \sin t \right) = 0,~t\geq 0, ~x(t)=0, ~t<0, x(0)=1.
\end{equation}
Here 
$\| b\|_{[0,\infty)}= 0.6$, $\tau=16$, $\delta=14$, 
take $\lambda=0.2$. Then 
$a(t)=e^{-0.2(15-\sin t)} 0.2 (2-\sin t)+ 0.2>0.2=\alpha$,
we evaluate numerically $\|b/a\|_{[0,\infty)}  < 2.6735$. Thus
$$
M_2= 16 e^{-2.8} \left\| \frac{b}{a} \right\|_{[0,\infty)} \left( 0.2+0.6e^{-0.2\cdot 14}\right) \approx 0.61515 <1. 
$$
Then
$M_0<2.6$, and we get the estimate 
$\displaystyle
|x(t)| <2.6 e^{0.2 t}$
for the solution of (\ref{ex_add_1}), see Fig.~\ref{figure2a} for the comparison with the absolute value of the 
numerical solution.

By Remark~\ref{remark5}, we get  $\displaystyle
|x(t)| < e^{0.6 t}$, which is less sharp for $t \geq 2.389$.

For the chosen initial conditions, the exponential estimate computed numerically has $\lambda \approx 0.09$.
However, higher growth rates are possible for the same or even smaller delays and the same coefficient. 
Let $\lfloor t \rfloor$ be the maximal integer not exceeding $t$.
In the problem
\begin{equation}\label{ex_add_1a}
\dot{x}(t)+  0.2 (2-\sin t)x (h(t)) = 0,~t\geq 0, ~x(t)=0, ~t<0, x(0)=1,~h(t)= 4\pi \left\lfloor \frac{t}{4\pi} \right\rfloor,
\end{equation}
the maximal delay of $4\pi$ is less than 14.
Direct computation implies $$x(t)=1-\int_0^t 0.2(2-\sin s)~ds,~ s\in [0,4\pi],~~
x(4\pi) =1 - 0.2 \int_0^{4\pi}(2-\sin s)~ds=1-1.6 \pi<0. $$
Next, $x(8\pi)= (1-1.6 \pi)-(1-1.6 \pi)1.6\pi=(1-1.6 \pi)^2$, $x(4\pi k)=(1-1.6 \pi)^k$.
Here the growth coefficient $\lambda = \ln(1.6\pi-1)/(4\pi) \approx 0.11$.



\begin{figure}[ht]
\centering
\includegraphics[scale=0.4]{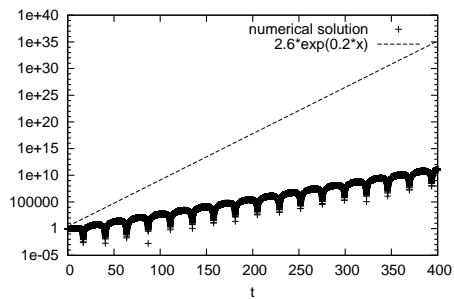}
\caption{The absolute value of the solution of (\protect{\ref{ex_add_1}}) with $x(0)=1$,
$\phi(t)=0$
compared to the theoretical estimate from Theorem~\protect{\ref{theorem4.3}}.
The scale in
$x$ is logarithmic.}
\label{figure2a}
\end{figure}
\end{example}
For both (\ref{ex_add_0}) and (\ref{ex_add_1}), Theorem~\ref{theorem4.3} gives an estimate with a lower exponent than 
Remark~\ref{remark5}. 


Finally, consider an example with oscillating coefficients. 

\begin{example}
\label{example3}
For the equation
\begin{equation}\label{ex1eq3}
\dot{x}(t)+  0.2 (0.5-\sin (t))x \left( t- \frac{2-\sin t}{6} \right) 
+ 0.2 (0.5+\sin (t))x \left( t- \frac{2-\sin t}{12} \right)= 0,~t\geq 0,
\end{equation}
we have $\tau_1=0.5$, $\tau_2 = 0.25$, $\| b_1\|_{[0,\infty)}=\| b_2\|_{[0,\infty)}=0.3$. 
Take $\lambda=0.02$, then we estimate numerically that
$$ a(t) \geq \alpha \approx 0.18,~\|b_1/a\|_{[0,\infty)} < 1.691,~\|b_2/a\|_{[0,\infty)} < 1.669,$$
in (\ref{4.0a}) we have 
$$M_1< \left( 0.02+0.3 e^{0.01}+0.3e^{0.005}  \right)\left(0.5 e^{0.01}\cdot 1.691+0.25e^{0.005}\cdot 1.669  \right) 
< 0.7953,$$
and $M_0 <4.89$, thus
for $x(0)=1$, $\phi(t) \equiv 0$, the solution of (\ref{ex1eq3}) satisfies 
\begin{equation}\label{osc}
 |x(t)| \leq 4.89 e^{-0.02 t}.
\end{equation}
\end{example}

\begin{remark}
Inequality \eqref{osc} implies that equation \eqref{ex1eq3} with two oscillating coefficients is uniformly exponentially stable.
We do not know other stability tests for this class of equations.
\end{remark}

\section{Conclusions and Discussion}

As we mentioned in Introduction, there are several related topics, in particular, asymptotic integration
or obtaining  asymptotic formulas for solutions of a given
equation. Such asymptotic expressions of the form
$\displaystyle
x(t)=(C+o(1))e^{\alpha t} $
have been constructed for various classes of differential equations:
scalar and vector, linear and nonlinear,  
equations with concentrated and distributed delays, including integro-differential equations. 
The review paper \cite{Appleby2} includes a detailed outline of these results with historical notes. 
Asymptotic formulas give a good approximation for long time behavior
of solutions, but are useless for finite intervals. 
Estimates obtained in the present paper apply to finite intervals, and constitute a natural addition to asymptotic formulas.
Most papers on asymptotic integration consider specific classes of equations, such as equations with
a dominating non-delay term, autonomous equations, equations with a single delay.
We have considered scalar equations
with several variable delays and measurable coefficients. 
Another advantage of our results is an explicit
form for these estimates, which we illustrated by examples of different types.

When investigating asymptotic stability, most authors considered equations with positive coefficients.
If both positive and negative coefficients were involved, it was usually assumed that a positive part dominates over the negative one, 
see, for example, \cite{AS,BB2,BB3,BB1,GD,GH2,Kz}. 
In Theorem~\ref{theorem4.1}, we got uniform exponential stability conditions, when the equation contains 
several delays, and all the coefficients may be oscillatory, see Example \ref{ex1eq3}. 
We suggest that in this case, not only the estimate, but also the stability test is new.

Assume that $b_k$ in Theorem~\ref{theorem4.2}  are proportional. Then (\ref{4_star}) leads to the following uniform exponential stability condition
\begin{equation}
\label{condition}
0 \leq b_k(t)\leq \beta_k, ~
\liminf_{t\rightarrow\infty}\sum_{k=1}^m b_k(t)>0,~t-\tau_k\leq h_k(t) \leq t,
~k=1, \dots,m,~\sum_{k=1}^m b_k  \tau_k < 1
\end{equation}
for (\ref{2.1}), and to a relevant estimate of solutions. Note that for (\ref{2.1}) with several variable delays and 
coefficients, one in the right-hand side of  (\ref{condition}) is the best possible constant \cite{Kz}.

Let us state possible extensions of the results of the present paper.

\begin{enumerate}
\item
Obtain explicit estimates of solutions for nonlinear equations in general, and for
specific cases, for example, models of population dynamics. 
These include, but not limited to,  the Hutchinson, the Mackey-Glass  and the Nicholson blowflies equations.
\item
Extend the estimates to a vector DDE. Consider other types of delay, such as distributed, as well as equations of the second and higher order, and stochastic differential equations.
\item
In this paper, we presented pointwise estimates. It would be interesting to obtain estimates in an integral form.
\item
For the case of a single delay
$$
\dot{x}(t)+b(t)x(h(t))=f(t), ~~b(t)\geq b_0>0, t-h(t)\leq \tau,
$$
we obtain estimates 
for $\|b\|\tau <1$. Relax this condition, replacing $1$ with either
$\frac{3}{2}$  or a sharp constant in $(1,1.5)$.
\end{enumerate}

\section*{Acknowledgment}

The second author acknowledges the support of NSERC, the grant RGPIN-2015-05976.
Both authors are grateful to the reviewer for valuable comments and suggestions.


\begin{thebibliography}{99}

\bibitem{ABBD}
R.\, Agarwal, L. Berezansky, E. Braverman, A. Domoshnitsky, 
Nonoscillation Theory of Functional Differential Equations with Applications,
Springer, New York, 2012.

\bibitem{Appleby}
J.\,A.\,D. Appleby and D.\,D. Patterson, 
Subexponential growth rates in functional differential equations,  
{\em Discrete Contin. Dyn. Syst.}  2015,  Dynamical systems, differential equations and applications. 10th AIMS Conference. Suppl., 56--65. 

\bibitem{Appleby2}
J.\,A.\,D. Appleby and D.\,D. Patterson,
Hartman-Wintner growth results for sublinear functional differential equations, 
{\em Electron. J. Differential Equations} {\bf 2017} (2017), Paper No. 21, 45 pp.



\bibitem{AS}
N.\,V. Azbelev and P.\,M. Simonov, Stability of Differential
Equations with Aftereffect. {\em Stability and Control:
Theory, Methods and Applications}, {\bf 20}. Taylor $\&$ Francis, London, 
2003.

\bibitem{Barreira}
L. Barreira and C. Valls, 
A Perron-type theorem for nonautonomous delay equations, {\em Cent. Eur. J. Math.}  {\bf 11}(7) (2013) ,1283--1295.


\bibitem{BB2}
L. Berezansky and E. Braverman,
On exponential stability of  linear differential
equations with several delays,
 {\em J. Math. Anal. Appl.} {\bf 324} (2006),  1336--1355.
\bibitem{BB3}
L. Berezansky and E. Braverman,
Explicit stability conditions for  linear differential equations with several delays,
{\em J. Math. Anal. Appl.} {\bf 332} (2007), 246--264.
\bibitem{BB5}
L. Berezansky and E. Braverman, 
Nonoscillation and exponential stability of delay differential equations with oscillating coefficients,
{\em J. Dyn. Control Syst.} {\bf 15}
(2009), 
63--82.

\bibitem{BB1}
L. Berezansky and E. Braverman, 
New stability conditions for linear differential equations with several delays, 
{\em Abstr. Appl. Anal.} 2011, Art. ID 178568, 19 pp.

\bibitem{Diblik1}
J. Dibl\'{i}k,
Asymptotic representation of solutions of equation 
$\dot{y}(t)=\beta(t)[y(t)-y(\tau(t))]$, {\em J. Math. Anal. Appl.} {\bf 217} (1998), 200--215.

\bibitem{Diblik2}
J. Dibl\'{i}k,
Long-time behaviour of solutions of delayed-type linear differential equations,
Electron. J. Qual. Theory Differ. Equ. 2018, Paper No. 47, 23 pp.

\bibitem{Dom}
A. Domoshnitsky, M. Gitman and R. Shklyar, 
Stability and estimate of solution to uncertain neutral delay systems,
{\em Bound. Value Probl.}  2014, 2014:55, 14 pp. 


\bibitem{F}
E. Fridman, 
Introduction to Time-Delay Systems. Analysis and
Control. Systems $\&$ Control: Foundations $\&$ Applications,
Birkh\"{a}user/Springer, Cham, 2014.


\bibitem{Gil}
M. Gil',  Stability of Vector Differential Delay Equations.
{\emph Frontiers in Mathematics}, Birkh\"{a}user $\&$ Springer Basel AG,
Basel, 2013.

\bibitem{Kharitonov}
K. Gu, V.\,L. Kharitonov and J. Chen, Stability of Time-delay
Systems, {\emph Control Engineering}, Birkh\"{a}user Boston, Inc.,
Boston, MA, 2003.

\bibitem{GD}
S.\,A. Gusarenko and A.\,I. Domoshnitsky,
 Asymptotic and oscillation properties of first-order linear scalar functional-differential
equations, {\em Differential Equations} {\bf 25}, 1989, 1480--1491.

\bibitem{GH1}
I. Gy\H{o}ri, F. Hartung and J. Turi, Preservation of stability in delay
equations under delay perturbations, {\em J. Math. Anal. Appl.}
{\bf  220} (1998), 290--312.

\bibitem{GH2}
I. Gy\H{o}ri and F. Hartung, Stability in delay perturbed differential and
difference equations, {\em  Topics in
functional differential and difference equations} (Lisbon, 1999),
{\em  Fields Inst. Commun.}, {\bf 29}, Amer. Math. Soc.,
Providence, RI, 2001, 181--194.

\bibitem{Gyori}
I. Gy\H{o}ri and L. Horv\'{a}th,
Sharp estimation for the solutions of inhomogeneous delay differential and Halanay-type inequalities, 
{\em Electron. J. Qual. Theory Differ. Equ.}  2018, Paper No. 54, 18 pp. 

\bibitem{Khar}
V.\,L. Kharitonov and D. Hinrichsen, 
Exponential estimates for time delay systems, {\em Systems Control Lett.} {\bf 53}  (2004),  
395--405. 

\bibitem{KM}
V.\,Kolmanovskii and A.\,Myshkis, 
Introduction to the Theory and
Applications of Functional-Differential Equations, {\emph Mathematics and
its Applications, 463}, Kluwer Academic Publishers, Dordrecht,
1999.

\bibitem{Kz}
T. Krisztin, On stability properties for one-dimensional
functional-differential equations,
{\em Funkcial. Ekvac.} {\bf 34} (1991), 241--256.


\bibitem{Philos}
Ch.\,G. Philos and I.\,K. Purnaras, 
On the behavior of the solutions for certain first order linear autonomous functional differential equations, 
{\em Rocky Mountain J. Math.}  {\bf 36}  (2006),  
1999--2019.

\bibitem{Pinto}
M. Pinto and J. Collardo,
Asymptotic integration of nonautonomous
delay-differential systems, {\em J. Math. Anal. Appl.} {\bf 199} (1996), 654--675.

\bibitem{Pituk}
M. Pituk,
A Perron type theorem for positive solutions
of functional differential equations,
Electron.  J. Qual. Theory Differ. Equ.
2018, Paper No. 57, 11 pp.

\bibitem{SYC}
J.\,W.\,H. So, J.\,S. Yu and M.\,P. Chen, Asymptotic stability for scalar
delay differential equations, {\em Funkcial. Ekvac.} {\bf 39} (1996),
1--17.


\bibitem{Hu}
S.\,G. Xu, J. Lam and M.\,Y. Zhong,  
New exponential estimates for time-delay systems,
{\em IEEE Trans. Automat. Control} {\bf  51}  (2006),  
1501--1505.


\bibitem{YS}
T. Yoneyama and J. Sugie,
 On the stability region of scalar delay-differential equations, 
{\em J. Math. Anal. Appl.} {\bf 134} 
(1988),  408--425.

















\end{thebibliography}
\end{document}